\documentclass[12pt]{amsart}

\usepackage{a4wide}

\usepackage{enumerate}
\usepackage{xcolor}

\newtheorem{theorem}{Theorem}[section]

\newtheorem{proposition}[theorem]{Proposition}

\newtheorem*{maintheorem1}{Theorem 1}
\newtheorem*{maintheorem2}{Theorem 2}

\theoremstyle{definition}

% Commands

\newcommand{\cal}[1]{\ensuremath{\mathcal{#1}}}
\newcommand{\II}{\ensuremath{I\!I}}

\DeclareMathOperator{\Ric}{Ric}
\DeclareMathOperator{\trace}{Tr}

\begin{document}

\title[Some problems on ruled hypersurfaces in nonflat complex space forms]{Some problems on ruled hypersurfaces\\ in nonflat complex space forms}

%    author one information
%\author[M.~Dom\'{\i}nguez-V\'{a}zquez]{Miguel Dom\'{\i}nguez-V\'{a}zquez}
%\address{Instituto de Ciencias Matem\'aticas (CSIC-UAM-UC3M-UCM), Madrid, Spain.}
%\email{miguel.dominguez@icmat.es}
%    author two information
\author[O.~P\'erez-Barral]{Olga P\'erez-Barral}
\address{Department of Mathematics, University of Santiago de Compostela, Spain.}
\email{olgaperez.barral@usc.es}

\thanks{The author acknowledges support by projects MTM2016-75897-P (AEI/FEDER, Spain) and ED431C 2019/10 (Xunta de Galicia, Spain), and by a research grant under the Ram\'on y Cajal project RYC-2017-22490 (AEI/FSE, Spain).} %Both authors have been supported by the project MTM2016-75897-P (AEI/FEDER, Spain). The first author acknowledges support by the project ED431F 2017/03 (Xunta de Galicia, Spain) and ICMAT Severo Ochoa project SEV-2015-0554 (MINECO, Spain), as well as by the European Union's Horizon 2020 research and innovation programme under the Marie Sklodowska-Curie grant agreement No.~745722. }

\subjclass[2010]{53B25, 53C42, 53C55}

%\date{}

\begin{abstract}
We study ruled real hypersurfaces whose shape operators have constant squared norm in nonflat complex space forms. In particular, we prove the nonexistence of such hypersurfaces in the projective case.
We also show that biharmonic ruled real hypersurfaces in nonflat complex space forms are minimal, which provides their classification due to a known result of Lohnherr and Reckziegel. 
\end{abstract}

\keywords{Complex projective space, complex hyperbolic space, ruled hypersurface, minimal hypersurface, strongly 2-Hopf hypersurface, biharmonic hypersurface.}

\maketitle

%%%%%%%%%%%%%%%%%%%%%%%%%%%%%%%%%%%%%%%%%%%%%%%%%%%%%%%%%%%%%%%%%%%%%%%%%%%
\section{Introduction}

A ruled real hypersurface in a nonflat complex space form, that is, in a complex projective or hyperbolic space, $\mathbb{C}P^{n}$ or $\mathbb{C}H^{n}$, is a submanifold of real codimension one which is foliated by totally geodesic complex hypersurfaces of $\mathbb{C}P^{n}$ or $\mathbb{C}H^{n}$. Ruled hypersurfaces in nonflat complex space forms constitute a very large class of real hypersurfaces. It becomes then an interesting problem to classify these objects under some additional geometric properties. For example, Lohnherr and Reckziegel classified ruled minimal hypersurfaces in nonflat complex space forms into three classes~\cite{LR}: Kimura type hypersurfaces in $\mathbb{C}P^{n}$ or $\mathbb{C}H^{n}$, bisectors in $\mathbb{C}H^{n}$ and Lohnherr hypersurfaces  in $\mathbb{C}H^{n}$. Moreover, they proved that Lohnherr hypersurfaces in $\mathbb{C}H^{n}$ are the only complete ruled hypersurfaces with constant principal curvatures in nonflat complex space forms~\cite{LR}. %It can be also characterized as the unique homogeneous ruled hypersurface in a complex space form, which can be derived from Takagi's and Berndt and Tamaru's classifications of homogeneous hypersurfaces in $\mathbb{C}P^{n}$ and $\mathbb{C}H^{n}$, respectively~\cite{tamaru,takagi}.

Another important notion in the context of real hypersurfaces is that of Hopf hypersurface, which is defined as a real hypersurface whose Reeb vector field is an eigenvector of the shape operator at every point. Ruled hypersurfaces in nonflat complex space forms are never Hopf; indeed, the smallest tangent distribution invariant under the shape operator and containing the Reeb vector field has rank two. In particular, the minimal ruled hypersurfaces mentioned above have an additional property, which has been introduced in \cite{DDV:annali}: they are strongly 2-Hopf, that is, the smallest distribution invariant under the shape operator and containing the Reeb vector field is integrable and has rank two, and the principal curvatures associated with the principal directions defining such distribution are constant along its integral submanifolds. This concept is also important since it characterizes, at least in the complex projective and hyperbolic planes $\mathbb{C}P^{2}$ and $\mathbb{C}H^{2}$, the real hypersurfaces of cohomogeneity one that can be constructed as union of principal orbits of a polar action of cohomogeneity two on the ambient space.

Motivated by some recent results concerning the classification of ruled hypersurfaces in nonflat complex space forms having constant mean curvature \cite{holi} or having constant scalar curvature \cite{Maeda_2}, we firstly focus on studying ruled hypersurfaces in $\mathbb{C}P^{n}$ or $\mathbb{C}H^{n}$ whose shape operators have constant squared norm, proving that there are no such hypersurfaces in $\mathbb{C}P^{n}$, whereas any possible example in $\mathbb{C}H^{n}$ must be strongly 2-Hopf.

	\begin{maintheorem1}
		Let $M$ be a ruled real hypersurface in a nonflat complex space form whose shape operator has constant norm. Then, $M$ is a strongly 2-Hopf real hypersurface in a complex hyperbolic space.	In particular, there are no ruled hypersurfaces in complex projective spaces whose shape operator has constant norm.
	\end{maintheorem1}

We note that, in the hyperbolic case, the Lohnherr hypersurfaces (as homogeneous ruled hypersurfaces) are examples of ruled real hypersurfaces with shape operator of constant norm. The problem of deciding whether these are the only such examples remains open.

\medskip

A hypersurface is said to be biharmonic if its defining isometric immersion is a biharmonic map, that is, a smooth map which is a critical point of the so-called bienergy functional (see, for example, \cite{ou} for more information on biharmonic hypersurfaces). It is well known that any minimal hypersurface is biharmonic and there exist some known results and conjectures claiming that the converse is, under certain circumstances, also true. For example, Chen conjectured that any minimal hypersurface in the Euclidean space is biharmonic. In the context of Riemannian manifolds of nonpositive curvature, it has been proved that both compact biharmonic hypersurfaces and biharmonic hypersurfaces with constant mean curvature are exactly the minimal ones \cite{jiang,ou}. However, if one removes these conditions %of having constant mean curvature 
one cannot ensure (in principle) minimality. Indeed, deciding whether, in general, biharmonicity implies minimality in ambient spaces of nonpositive curvature is the content of the generalized Chen's conjecture, proposed by Caddeo, Montaldo and Oniciuc in \cite{caddeo}. Ou and Tang have constructed some counterexamples which prove that this conjecture is not true \cite{outang}. However, it is still one of the main motivations for studying biharmonic hypersurfaces in the setting of Riemannian manifolds of nonpositive curvature due to the incompleteness of the examples provided by these two authors. Then, it becomes interesting to study biharmonic hypersurfaces satisfying other conditions, such as ruled ones, particularly in complex hyperbolic spaces, which are negatively curved. We point out that a general study of biharmonic submanifolds of arbitrary codimension in complex space forms has been developed in \cite{fetcu}.

It has been recently proved (see \cite{toru}) that biharmonic ruled hypersurfaces in complex projective spaces are minimal. Our second goal in this article is to extend this result to the entire context of nonflat complex space forms. In particular, we prove the following result.

\begin{maintheorem2}
	Let $M$ be a biharmonic ruled real hypersurface in a nonflat complex space form. Then, $M$ is minimal, and therefore an open part of one of the following hypersurfaces:
	\begin{enumerate}
		\item a Kimura type hypersurface in a complex projective or hyperbolic space, or
		\item a bisector in a complex hyperbolic space, or
		\item a Lohnherr hypersurface in a complex hyperbolic space.
	\end{enumerate}
\end{maintheorem2}

%%%%%%%%%%%%%%%%%%%%%%%%%%%%%%%%%%%%%%%%%%%%%%%%%%%%%%%%%%%%%%%%%%%%%%%%%%%%
\section{Preliminaries}\label{sec:preliminaries}

In this section we introduce the notation and terminology that we are going to use throughout this article. For more information, we refer to \cite{CR} and \cite{NR}.

Let $\bar{M}$ be a Riemannian manifold and $M$ a smooth hypersurface of $\bar{M}$. Since the arguments that follow are local, we can assume that $M$ is embedded and take a unit normal vector field $\xi$ on $M$. We denote by $\langle\,\cdot\,,\,\cdot\,\rangle$ the metric of $\bar{M}$, by $\bar{\nabla}$ its Levi-Civita connection and by $\bar{R}$ its curvature tensor. Let $\nabla$ be the Levi-Civita connection of $M$. Then, the relation between $\nabla$ and $\bar{\nabla}$ is given by the Gauss formula $\bar{\nabla}_{X}Y=\nabla_{X}Y+\II(X,Y)$, where $\II$ denotes the second fundamental form of $M$. 

The shape operator $S$ of $M$ is the endomorphism of the tangent bundle of $M$ given by $SX=-(\bar{\nabla}_{X}\xi)^{\top}$, where $X$ is a tangent vector to $M$ and $(\cdot)^\top$ denotes the orthogonal projection onto the tangent space to $M$. As the shape operator is a self-adjoint endomorphism with respect to the induced metric on $M$, it can be diagonalized with real eigenvalues and orthogonal eigenspaces. Each eigenvalue $\lambda$ is called a principal curvature and its corresponding eigenspace, $T_{\lambda}$, is a principal curvature space. %We denote by $T_{\lambda}$ the principal curvature space associated with the eigenvalue $\lambda$.

In this paper we will need some second order equations of submanifold geometry. In particular, we will use the Codazzi equation
\begin{equation*}
\langle \bar{R}(X,Y)Z,\xi\rangle = \langle(\nabla_{X}S)Y,Z\rangle-\langle(\nabla_{Y}S)X,Z\rangle,
\end{equation*}
as well as the Gauss equation
\begin{equation*}
\langle\bar{R}(X,Y)Z,W\rangle = \langle R(X,Y)Z,W\rangle + \langle SX,Z\rangle\langle SY,W\rangle -\langle SX,W\rangle\langle SY,Z\rangle,
\end{equation*}
where $X,Y,Z,W\in\Gamma(TM)$ are sections of $TM$.

In what follows, we will assume that the ambient manifold $\bar{M}$ is a complex space form of complex dimension $n$ and constant holomorphic sectional curvature $c\in\mathbb{R}$, $\bar{M}^{n}(c)$. It is known that its curvature tensor $\bar{R}$ is given by
	\begin{align*}
	\langle\bar{R}(X,Y)Z,W\rangle={}&
	\frac{c}{4}\Bigl(
	\langle Y,Z\rangle\langle X,W\rangle
	-\langle X,Z\rangle\langle Y,W\rangle\\[-1ex]
	&\phantom{\frac{c}{4}\Bigl(}
	+\langle JY,Z\rangle\langle JX,W\rangle
	-\langle JX,Z\rangle\langle JY,W\rangle
	-2\langle JX,Y\rangle\langle JZ,W\rangle
	\Bigr),
	\end{align*}
where $J$ denotes the complex structure of $\bar{M}^{n}(c)$. Moreover, since $\bar{M}^{n}(c)$ is a K\"ahler manifold, $\bar{\nabla}J=0$.

Now let $M$ be a real hypersurface of $\bar{M}^{n}(c)$, that is, a real submanifold of real codimension one. The tangent vector field $J\xi$ is usually called the Hopf or Reeb vector field of $M$. We define the integer-valued function $h$ on $M$ as the number of the principal curvature spaces onto which $J\xi$ has nontrivial projection.

A real hypersurface $M$ of $\bar{M}^{n}(c)$ is said to be ruled if it is foliated by totally geodesic complex hypersurfaces of $\bar{M}^{n}(c)$. Equivalently, $M$ is ruled if the orthogonal distribution to the Hopf vector field $J\xi$ is integrable and its leaves are totally geodesic submanifolds of $\bar{M}^{n}(c)$. 
Locally, ruled hypersurfaces are embedded, but globally, they may have self-intersections and singularities. See~\cite{Maeda} and~\cite[Section 8.5.1]{CR} for more information on ruled real hypersurfaces. 

We finally recall the notion of strongly 2-Hopf real hypersurface \cite{DDV:annali}. A real hypersurface $M$ in $\bar{M}^{n}(c)$ is said to be strongly 2-Hopf if the following conditions hold:
\begin{enumerate}
	\item The smallest $S$-invariant distribution $\mathcal{D}$ of $M$ that contains the Hopf vector field $J\xi$ has rank 2.
	\item $\mathcal{D}$ is integrable.
	\item The spectrum of $S|_{\mathcal{D}}$ is constant along the integral submanifolds of $\mathcal{D}$.
\end{enumerate}
Notice that the first condition is equivalent to $h=2$ and that a hypersurface satisfying both (1) and (2) is said to be a 2-Hopf hypersurface \cite[Section 8.5.1]{CR}.

%%%%%%%%%%%%%%%%%%%%%%%%%%%%%%%%%%%%%%%%%%%%%%%%%%%%%%%%%%%%%%%%%%%%%%%%%%%%%%%%%
\section{Proof of the Main Theorems}
 
Let $M$ be a ruled real hypersurface in a nonflat complex space form $\bar{M}^{n}(c)$, $c\neq0$, with (locally defined) unit normal vector field $\xi$. 

We briefly recall some facts from~\cite[Section 3]{holi}. It is known that there exists an open and dense subset $\cal{U}$ of $M$ where $h=2$. $\cal{U}$ has exactly two nonzero principal curvature functions $\alpha$ and $\beta$, both of multiplicity one at every point, and $J\xi=aU+bV$ for some unit vector fields $U\in\Gamma(T_\alpha)$ and $V\in \Gamma(T_\beta)$ and nonvanishing smooth functions $a$, $b\colon \cal{U}\to\mathbb{R}$ satisfying $a^2+b^2=1$ and
\begin{equation}\label{eq:1}
a^2=\frac{\alpha}{\alpha-\beta}, \qquad b^2=\frac{\beta}{\beta-\alpha}.
\end{equation}
 From now on, we will work in the open and dense subset $\cal{U}$ of $M$. 

With this notation, it can be proved~\cite[Lemma~3.1]{DD:indiana} that there exists a unit vector field $A\in\Gamma(T_0)$ such that
\begin{align}\label{eq:alg}
J\xi&=aU+bV, &JU&=-bA-a\xi, &JV&=aA-b\xi, &JA&=bU-aV.
\end{align}

Taking these expressions into account, we obtain the following.

\begin{proposition}\label{prop:Levi-Civita}
		Let $M$ be a ruled hypersurface in a nonflat complex space form. Then, its Levi-Civita connection satisfies the following equations:
	\begin{align*}
		&\langle\nabla_{U}U,V\rangle=\frac{V(\alpha)}{\alpha-\beta},
		&&\langle\nabla_{V}V,U\rangle=-\frac{U(\beta)}{\alpha-\beta},\\
		&\langle\nabla_{U}U,A\rangle=\frac{4A(\alpha)-3abc}{4\alpha},
		&&\langle\nabla_{V}V,A\rangle=\frac{4A(\beta)+3abc}{4\beta},\\
		&\langle\nabla_{U}V,A\rangle=\frac{3c}{4(\alpha-\beta)}+\alpha-\frac{aA(\alpha)}{b\alpha},
		&&\langle\nabla_{V}U,A\rangle=\frac{3c}{4(\alpha-\beta)}-\beta-\frac{bA(\beta)}{a\beta},\\
		&\langle\nabla_{A}U,V\rangle=\frac{ac\beta-4a\alpha\beta^{2}-4b\alpha A(\beta)}{4a\beta(\alpha-\beta)},
		&&\nabla_{A}A=0.
	\end{align*}		
Moreover, for any unit vector field orthogonal to $A$ in the 0-principal curvature distribution, $W\in\Gamma(T_{0}\ominus\mathbb{R}A)$, the following relations hold:
\begin{align*}
&\langle\nabla_{U}U,W\rangle=\frac{W(\alpha)}{\alpha},
&&\langle\nabla_{V}V,W\rangle=\frac{W(\beta)}{\beta},
&&\langle\nabla_{W}U,V\rangle=\frac{b\alpha W(\beta)}{a\beta(\beta-\alpha)}.
\end{align*}
In addition, 
\begin{align*}
	&U(\beta)=-\frac{\beta^{2}
	 U(\alpha)+2ab\alpha(\alpha-\beta)V(\beta)}{3\alpha\beta}, 
	&&V(\alpha)=\frac{2ab\beta(\alpha-\beta)U(\alpha)-\alpha^{2}V(\beta)}{3\alpha\beta},\\
	&A(\alpha)=\frac{b(\alpha-\beta)(a\beta(4\alpha\beta-c)+2b\alpha A(\beta))}{2\beta^{2}},
	&&W(\alpha)=-\frac{\alpha}{\beta}W(\beta).
\end{align*}
\end{proposition}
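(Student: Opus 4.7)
The plan is to derive each of the listed relations from a systematic application of two tools: the Codazzi equation of $M\subset\bar{M}^n(c)$, and the K\"ahler condition $\bar{\nabla}J=0$ combined with the Weingarten relation $\bar{\nabla}_X\xi=-SX$ and the Gauss formula. On the Codazzi side, the left-hand side $\langle\bar{R}(X,Y)Z,\xi\rangle$ expands via the explicit formula for $\bar{R}$ in $\bar{M}^n(c)$, every $J$-inner product being evaluated using (\ref{eq:alg}). On the right-hand side, each derivative $(\nabla_X S)Y$ is expanded as $(\nabla_X S)Y=X(\lambda)Y+\lambda\nabla_X Y-S\nabla_X Y$ whenever $Y$ is an eigenvector of $S$ with eigenvalue $\lambda\in\{\alpha,\beta,0\}$. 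After these expansions, self-adjointness of $S$ together with orthogonality of the principal spaces collapses most terms and produces a linear equation relating a small number of target connection components to derivatives of $\alpha$, $\beta$ and the constant $c$.

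Concretely, the identities for $\langle\nabla_U U,V\rangle$, $\langle\nabla_V V,U\rangle$, $\langle\nabla_U U,A\rangle$, $\langle\nabla_V V,A\rangle$, $\langle\nabla_U V,A\rangle$ and $\langle\nabla_V U,A\rangle$ follow by applying Codazzi to the triples $(U,V,U)$, $(V,U,V)$, $(U,A,U)$, $(V,A,V)$, $(U,V,A)$ and $(V,U,A)$ respectively (some producing the desired identity directly, others after substituting a previously obtained value). The analogues with $W\in T_0\ominus\R A$ in place of $A$, namely Codazzi on $(U,W,U)$ and $(V,W,V)$, yield $\langle\nabla_U U,W\rangle$ and $\langle\nabla_V V,W\rangle$. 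The identity $\nabla_A A=0$ is obtained by projecting onto $U$, $V$ and an arbitrary $W$, using $\langle\nabla_A A,X\rangle=-\langle A,\nabla_A X\rangle$; the relevant Codazzi outputs simplify drastically because $SA=0$.

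For the remaining connection components and the derivative identities of the last block, one invokes the K\"ahler condition in the form $\bar{\nabla}_X(aU+bV)=\bar{\nabla}_X(J\xi)=-JSX$. Expanding the left-hand side with the Gauss formula, projecting onto each of $U$, $V$, $A$, $W$, $\xi$ and using the connection components already determined, one reads off expressions for $X(a)$ and $X(b)$ for each $X\in\{U,V,A,W\}$. Differentiating (\ref{eq:1}) yields a second expression for $X(a)$ and $X(b)$ purely in terms of $X(\alpha)$ and $X(\beta)$; matching the two gives the stated derivative identities, and at the same time closes the system determining $\langle\nabla_A U,V\rangle$ and $\langle\nabla_W U,V\rangle$. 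In particular, the identity $\langle\nabla_W U,V\rangle=b\alpha W(\beta)/(a\beta(\beta-\alpha))$ is intertwined with the compatibility relation $W(\alpha)=-(\alpha/\beta)W(\beta)$: both fall out simultaneously from the Codazzi triple $(W,U,V)$ together with the $W$-component of the K\"ahler identity and the constraint $a^2+b^2=1$.

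The main obstacle is organizational rather than conceptual. The number of equations is substantial, several target formulas emerge only after combining two or three Codazzi outputs with a K\"ahler computation and eliminating an intermediate unknown, and the equivalent rewriting of certain expressions (for instance using $a^2\beta+b^2\alpha=0$, a direct consequence of (\ref{eq:1})) is needed to match the forms displayed in the statement. Once a consistent order for the derivations is fixed, each individual step is a short algebraic calculation.
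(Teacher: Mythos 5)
Your overall strategy --- Codazzi equations on selected triples combined with the K\"ahler identity $\bar{\nabla}J=0$, the Weingarten and Gauss formulas, and the differentiated form of \eqref{eq:1} --- is exactly the paper's, and most of your triples coincide with those used there. However, two specific steps would fail as you describe them. First, the Codazzi equation is antisymmetric in its first two arguments, so the triples $(U,V,A)$ and $(V,U,A)$ produce one and the same equation; moreover that single equation only involves the combination $\beta\langle\nabla_{U}V,A\rangle-\alpha\langle\nabla_{V}U,A\rangle$, so it cannot determine $\langle\nabla_{U}V,A\rangle$ and $\langle\nabla_{V}U,A\rangle$ separately. The paper obtains these two components from the K\"ahler identities $0=U\langle J\xi,A\rangle$ and $0=V\langle J\xi,A\rangle$ (after feeding in $\langle\nabla_{U}U,A\rangle$ and $\langle\nabla_{V}V,A\rangle$ from Codazzi on $(U,A,U)$ and $(V,A,V)$), and then uses the single Codazzi equation on $(U,V,A)$ for a different purpose: to extract the stated relation between $A(\alpha)$ and $A(\beta)$.

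Second, the component $\langle\nabla_{A}A,W\rangle$ is invisible to the Codazzi equation: for the triple $(A,W,A)$ both sides vanish identically (every $J$-product appearing in $\langle\bar{R}(A,W)A,\xi\rangle$ is zero, and $SA=SW=0$ kills the shape-operator side), so no ``Codazzi output'' controls this component. The paper's route is indirect: Codazzi on $(A,U,W)$ gives $\langle\nabla_{A}U,W\rangle=0$, hence $\nabla_{A}U$ is proportional to $V$; since $T_{0}\ominus\mathbb{R}A$ is $J$-invariant this forces $\langle\nabla_{A}U,JW\rangle=0$, and expanding $0=A\langle JU,W\rangle$ then yields $b\langle\nabla_{A}A,W\rangle=\langle\nabla_{A}U,JW\rangle=0$. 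Both repairs stay inside the toolkit you announce (derivatives of inner products with $J$), but neither is a routine substitution, so they need to be made explicit. The rest of your plan --- deriving $U(\beta)$ and $V(\alpha)$ from the linear system obtained by matching the two computations of $U(a)$ and $V(a)$, and getting $W(\alpha)=-(\alpha/\beta)W(\beta)$ together with $\langle\nabla_{W}U,V\rangle$ from the Codazzi triples $(U,V,W)$ and $(V,W,U)$ plus the K\"ahler identity --- matches the paper's proof.
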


\begin{proof}
	%CALCULATIONS INVOLVING ONLY U, V:
	Since $U$ and $V$ are orthogonal eigenvectors of the shape operator $S$ associated with the eigenvalues $\alpha$ and $\beta$, respectively, we have
	\begin{align*}
	\langle (\nabla_{U}S)V, U\rangle &= \langle \nabla_{U}(SV)-S\nabla_{U}V, U\rangle = \langle \nabla_{U}(\beta V), U\rangle -  \alpha\langle \nabla_{U}V, U\rangle\\
	&=\langle U(\beta)V+\beta\nabla_{U}V, U\rangle -\alpha \langle\nabla_{U}V,U \rangle= -(\alpha-\beta)\langle \nabla_{U}V, U\rangle.
	\end{align*}
	As $U$ has constant length, $\langle \nabla_{V}U,U\rangle=0$, and thus, proceeding as before,
	\begin{align*}
	\langle (\nabla_{V}S)U, U\rangle &= \langle \nabla_{V}(SU)-S\nabla_{V}U, U\rangle = \langle \nabla_{V}(\alpha U), U\rangle - \alpha\langle \nabla_{V}U, U\rangle\\
	&=\langle V(\alpha)U+\alpha\nabla_{V}U,U\rangle-\alpha\langle \nabla_{V}U, U\rangle=V(\alpha).
	\end{align*}
	Using the expression for the curvature tensor of a complex space form and the relations~\eqref{eq:alg}, we obtain that $\langle\bar{R}(U,V)U,\xi\rangle=0$. Then, using the previous relations to apply the Codazzi equation to the triple $(U,V,U)$, we get
	\begin{equation}\label{eq:cod_uvu}
	0=V(\alpha)+(\alpha-\beta)\langle \nabla_{U}V,U\rangle,
	\end{equation}
	which gives the first relation in the statement.
	Analogously, the Codazzi equation applied to the triple $(V,U,V)$ yields
	\begin{equation}\label{eq:cod_vuv}
	0=U(\beta)-(\alpha-\beta)\langle\nabla_{V}U,V\rangle,
	\end{equation}
	which is equivalent to the second relation in the statement.
	
	Since $\bar{\nabla}J=0$, using the definition of the shape operator and the relations $J\xi=aU+bV$ and $\langle\nabla_{U}U,U\rangle=0$, we obtain
	\begin{align*}
	U(a)&= U\langle J\xi,U\rangle = \langle \bar{\nabla}_{U}J\xi,U\rangle+\langle J\xi,\bar{\nabla}_{U} U\rangle = \langle J\bar{\nabla}_{U}\xi,U\rangle+\langle J\xi,\nabla_U U\rangle\\
	&= \langle SU,JU \rangle + \langle aU+bV,\nabla_{U}U\rangle = b\langle V,\nabla_{U}U \rangle = -b\langle \nabla_{U}V,U \rangle.
	\end{align*}
	By multiplying this expression by $2a$ and taking into account that 
	\begin{equation*}
	2aU(a)=U(a^{2})=U\left(\frac{\alpha}{\alpha-\beta}\right)=\frac{\alpha U(\beta)-\beta U(\alpha)}{(\alpha-\beta)^{2}},
	\end{equation*}
	we get, using \eqref{eq:cod_uvu}, 
	\begin{equation}\label{eq:deriv_ua}
	0=\beta U(\alpha)-\alpha U(\beta)+2ab(\alpha-\beta)V(\alpha).
	\end{equation}
	Analogously, expanding the relation $V(a)=V\langle J\xi,U\rangle$, we deduce, inserting \eqref{eq:cod_vuv}, that
	\begin{equation}\label{eq:deriv_va}
	0=\beta V(\alpha)-\alpha V(\beta)-2ab(\beta-\alpha)U(\beta).
	\end{equation}
	
	%Igual meto aquí las formulillas para U(\beta) y V(\alpha).
	
	Equations \eqref{eq:deriv_ua} and \eqref{eq:deriv_va} constitute a linear system in the unknowns $U(\beta)$ and $V(\alpha)$. After some calculations using~\eqref{eq:1}, we get that the determinant of the matrix of this system vanishes if and only in $\alpha\beta$ does, which cannot occur in $\mathcal{U}$. Then, there exists a unique solution given by
	\begin{align}\label{eq:ubeta,valpha}
	&U(\beta)=-\frac{2ab\alpha(\alpha-\beta)V(\beta)+\beta^{2}U(\alpha)}{3\alpha\beta}, &&V(\alpha)=\frac{2ab\beta(\alpha-\beta)U(\alpha)-\alpha^{2}V(\beta)}{3\alpha\beta}.
	\end{align}

	%CALCULATIONS WITH A, U, V:
	Now, proceeding as above, the Codazzi equation applied to the triples $(U,A,U)$, $(V,A,V)$, $(A,U,A)$ and $(A,V,A)$ yields
	\begin{align}\label{eq:cod_A}
	&\langle\nabla_{U}U,A\rangle=\frac{4A(\alpha)-3abc}{4\alpha}, &&\langle \nabla_{V}V,A\rangle=\frac{3abc+4A(\beta)}{4\beta},\\\nonumber
	&\langle\nabla_{A}A,U\rangle=0,
	&&\langle\nabla_{A}A,V\rangle=0.
	\end{align}

	Since $\bar{\nabla}J=0$ and $J\xi=aU+bV$, expanding the relations $0=U\langle J\xi,A\rangle$ and $0=V\langle J\xi,A\rangle$, inserting the expressions for $\langle\nabla_{U}A,U\rangle$ and $\langle \nabla_{V}A,V\rangle$ that follow from~\eqref{eq:cod_A}, and using~\eqref{eq:1}, we have
	\begin{align}\label{eq:deriv_A}
	&\langle\nabla_{U}V,A\rangle= \frac{3c}{4(\alpha-\beta)}+\alpha-\frac{aA(\alpha)}{b\alpha},%\\\nonumber
	&&\langle \nabla_{V}A,U \rangle =\frac{bA(\beta)}{a\beta}+\beta-\frac{3c}{4(\alpha-\beta)}.
	\end{align}

	Now, the Codazzi equation applied to the triple $(V,A,U)$ yields, after inserting the expression for $\langle\nabla_{V}A,U\rangle$ given in \eqref{eq:deriv_A},
	\begin{align*}
	0&=\langle \bar{R}(V,A)U,\xi\rangle-\langle(\nabla_V S)A,U\rangle+\langle(\nabla_A S)V,U\rangle\\%\nonumber
	&=\frac{c(2\alpha+\beta)}{4(\alpha-\beta)}+\alpha\langle\nabla_{V}A,U\rangle-(\alpha-\beta)\langle\nabla_{A}V,U\rangle%\nonumber
	=-\frac{c}{4}+\alpha\beta+(\beta-\alpha)\langle\nabla_{A}V,U\rangle+\frac{b\alpha A(\beta)}{a\beta},
	\end{align*}
	from where
	\begin{equation}\label{eq:cod_vau}
		\langle\nabla_{A}V,U\rangle=\frac{4b\alpha A(\beta)+4a\alpha\beta^{2}-ac\beta}{4a\beta(\alpha-\beta)}.
	\end{equation}
	Similarly, applying the Codazzi equation to the triple $(U,V,A)$ and using the expressions for $\langle\nabla_{U}V,A\rangle$ and $\langle\nabla_{V}U,A\rangle$ given by~\eqref{eq:deriv_A}, we obtain
	\begin{align*}
	0&=\langle \bar{R}(U,V)A,\xi\rangle-\langle (\nabla_U S)V,A\rangle+\langle(\nabla_V S)U,A\rangle\\&=-\frac{c}{4}-\beta\langle\nabla_{U}A,V\rangle+\alpha\langle\nabla_{V}A,U\rangle=\frac{c}{2}-2\alpha\beta+\frac{a\beta A(\alpha)}{b\alpha}-\frac{b\alpha A(\beta)}{a\beta},
	\end{align*}
	from where, using~\eqref{eq:1}, we get the following relation between $A(\alpha)$ and $A(\beta)$:
	\begin{equation}\label{eq:a_alpha}
	A(\alpha)=\frac{b(\alpha-\beta)(a\beta(4\alpha\beta-c)+2b\alpha A(\beta))}{2\beta^{2}}.
	\end{equation}

	%CALCULATIONS WITH U,V,W and A:
	Finally, let $W\in\Gamma(T_{0}\ominus\mathbb{R}A)$ be an arbitrary unit vector field orthogonal to $A$ in the 0-principal curvature distribution. Using the expressions for $J\xi, JU$ and $JA$ given in~\eqref{eq:alg}, the Codazzi equation applied to the triple $(A,U,W)$ yields $\langle\nabla_{A}U,W\rangle=0$, from where we deduce, using \eqref{eq:cod_A}, that $\nabla_{A}U$ is proportional to $V$. In particular, since  $T_{0}\ominus\mathbb{R}A$ is a complex distribution, $\langle\nabla_{A}U,JW\rangle=0$. Expanding the relation $0=A\langle JU,W\rangle$, and taking the previous fact into account, as well as the expression for $JU$ given in~\eqref{eq:alg}, one gets
	\begin{equation*}
		b\langle\nabla_{A}A,W\rangle=\langle\nabla_{A}U,JW\rangle=0.
	\end{equation*}
	Then, $\langle\nabla_{A}A,W\rangle=0$ and, in fact, using~\eqref{eq:cod_A}, we obtain that $\nabla_{A}A=0$.
	
	Proceeding as above, the Codazzi equation applied to the triples $(U,W,U)$ and $(V,W,V)$ yields
	\begin{align}\label{eq:cod_w}
	&\langle \nabla_{U}U,W\rangle=\frac{W(\alpha)}{\alpha}, &&\langle \nabla_{V}V,W\rangle=\frac{W(\beta)}{\beta}.
	\end{align}

	Expanding the relations $0=U\langle J\xi,W\rangle$ and $0=V\langle J\xi,W\rangle$ and inserting the expressions for $\langle\nabla_{U}U,W\rangle$ and $\langle\nabla_{V}V,W\rangle$ given by~\eqref{eq:cod_w}, we have
	\begin{align}\label{eqs_w}
	&\langle\nabla_{U}V,W\rangle=-\frac{aW(\alpha)}{b\alpha}, &&\langle\nabla_{V}U,W\rangle=-\frac{bW(\beta)}{a\beta}.
	\end{align}

	After some calculations using~\eqref{eq:cod_w} and~\eqref{eqs_w}, the Codazzi equation applied to the triple $(V,W,U)$ yields
	\begin{equation}\label{eq:cod_vwu}
		\langle\nabla_{W}V,U\rangle=\frac{b\alpha W(\beta)}{a\beta(\alpha-\beta)}
	\end{equation}
	and, analogously, the Codazzi equation applied to the triple $(U,V,W)$ reads
	\begin{equation*}\label{eq:cod_uvw}
	0=\frac{a\beta W(\alpha)}{b\alpha}-\frac{b\alpha W(\beta)}{a\beta},
	\end{equation*}
from where we get, after using~\eqref{eq:1}, the last formula in the statement. 
\end{proof}

%%%%%%%%%%%%%%%%%%%%%%%%%%%% Proof of Theorem 1:
\subsection[Constant norm of $S$]{Ruled hypersurfaces whose shape operator has constant norm}
We firstly focus on the proof of Theorem 1. 

Let $k=\alpha^{2}+\beta^{2}$ denote the squared norm of the shape operator of $M$. Since by hypothesis $k$ is constant, $X(k)=0$ for each $X\in TM$. Thus, $\alpha X(\alpha)+\beta X(\beta)=0$, from where 
\begin{equation}\label{eq:X_beta}
X(\beta)=-\frac{\alpha X(\alpha)}{\beta}, \ \text{ for each } X\in TM.
\end{equation}
Taking this fact into account, one can rewrite some of the relations given in Proposition~\ref{prop:Levi-Civita} in an easier way.

\begin{proposition}\label{prop:Levi-Civita-k}
	Suppose that $\alpha\neq-\beta$ on an open subset of $\cal{U}$. Then, with the previous notations, the Levi-Civita connection of such open subset satisfies the following equations:
		\begin{align*}
			&\nabla_U U=-\frac{ab(8\alpha\beta^{2}+c(3\alpha+\beta))}{4\alpha(\alpha+\beta)}A,
			%\\
			&\nabla_U V=\frac{c(3\alpha+\beta)+4\alpha(\alpha^{2}+\beta^{2})}{4(\alpha^{2}-\beta^{2})}A,
			\\
			&\nabla_V V=\frac{ab(8\alpha^{2}\beta+c(\alpha+3\beta))}{4\beta(\alpha+\beta)}A,
			%\\
			&\nabla_V U=\frac{c(\alpha+3\beta)+4\beta(\alpha^{2}+\beta^{2})}{4(\alpha^{2}-\beta^{2})}A.
		\end{align*}
	Moreover:
	\begin{equation}\label{eq:deriv_functions}
	U(\alpha)=V(\alpha)=W(\alpha)=0, \qquad \text{and}\qquad A(\alpha)=\frac{ab\beta(c-4\alpha\beta)}{2(\alpha+\beta)},
	\end{equation}
	for any $W\in\Gamma(T_{0}\ominus\mathbb{R}A)$.
\end{proposition}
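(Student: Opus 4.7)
The plan is to use the hypothesis $X(k)=0$, i.e.\ $X(\beta)=-\alpha X(\alpha)/\beta$ for every $X\in TM$, to eliminate all derivatives of $\beta$ from the identities already collected in Proposition~\ref{prop:Levi-Civita}, reducing each of $U(\alpha),V(\alpha),W(\alpha),A(\alpha)$ to a closed form; the connection coefficients will then follow by direct substitution. As a warm-up, combining $W(\alpha)=-(\alpha/\beta)W(\beta)$ (the last equation of Proposition~\ref{prop:Levi-Civita}) with $W(\beta)=-(\alpha/\beta)W(\alpha)$ gives $(\beta^{2}-\alpha^{2})W(\alpha)=0$. Since $\alpha,\beta$ are distinct principal curvatures on $\cal{U}$ and $\alpha\ne -\beta$ by assumption, this forces $W(\alpha)=W(\beta)=0$ for every $W\in\Gamma(T_{0}\ominus\R A)$.

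The core algebraic step is the determination of $U(\alpha)$ and $V(\alpha)$. Substituting $U(\beta)=-\alpha U(\alpha)/\beta$ and $V(\beta)=-\alpha V(\alpha)/\beta$ into the two identities for $U(\beta)$ and $V(\alpha)$ at the bottom of Proposition~\ref{prop:Levi-Civita} yields a homogeneous linear system in the pair $(U(\alpha),V(\alpha))$. Converting $a^{2}b^{2}$ via $a^{2}=\alpha/(\alpha-\beta)$ and $b^{2}=\beta/(\beta-\alpha)$ into $-\alpha\beta/(\alpha-\beta)^{2}$, I expect the determinant of this system to simplify to a nonzero multiple of $\alpha\beta(\alpha-\beta)^{2}(\alpha+\beta)^{2}$. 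Since $\alpha\beta\ne 0$, $\alpha\ne\beta$, and $\alpha\ne -\beta$ on the chosen open subset, the unique solution is $U(\alpha)=V(\alpha)=0$, whence also $U(\beta)=V(\beta)=0$. This is the step I expect to be the main obstacle, and it is precisely here that the hypothesis $\alpha+\beta\ne 0$ is decisive.

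For $A(\alpha)$, I would substitute $A(\beta)=-\alpha A(\alpha)/\beta$ into the formula $A(\alpha)=b(\alpha-\beta)(a\beta(4\alpha\beta-c)+2b\alpha A(\beta))/(2\beta^{2})$ from Proposition~\ref{prop:Levi-Civita}, obtaining a linear equation whose coefficient of $A(\alpha)$, after using the identity $b^{2}(\alpha-\beta)=-\beta$, factors as a nonzero multiple of $\beta(\alpha-\beta)(\alpha+\beta)$. Solving gives exactly the formula for $A(\alpha)$ in~\eqref{eq:deriv_functions}. For the connection coefficients, each of $\nabla_{U}U,\nabla_{V}V,\nabla_{U}V,\nabla_{V}U$ must lie in the $\R A$-direction: the $U$- and $V$-components vanish by unit length together with the identities $\langle\nabla_{U}U,V\rangle=V(\alpha)/(\alpha-\beta)=0$ and their analogues, while the $W$-components vanish because $W(\alpha)=W(\beta)=0$. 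The $A$-components given in Proposition~\ref{prop:Levi-Civita} are then rational expressions in $A(\alpha),A(\beta)$ and $\alpha,\beta,a,b,c$; substituting the closed form of $A(\alpha)$ (and $A(\beta)=-\alpha A(\alpha)/\beta$) and simplifying with $a^{2}+b^{2}=1$ produces the four stated formulas. This last part is a mechanical calculation with no conceptual surprises.
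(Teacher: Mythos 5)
Your proposal is correct and follows essentially the same route as the paper: substitute $X(\beta)=-\alpha X(\alpha)/\beta$ into the identities of Proposition~\ref{prop:Levi-Civita}, observe that the resulting homogeneous system for $(U(\alpha),V(\alpha))$ has determinant $-3\alpha\beta(\alpha^2-\beta^2)^2\neq 0$ (your predicted factorization $\alpha\beta(\alpha-\beta)^2(\alpha+\beta)^2$ agrees with this), solve the linear equation for $A(\alpha)$, deduce $W(\alpha)=0$, and back-substitute to get the connection coefficients. The only (welcome) difference is that you make explicit why $\nabla_UU$, $\nabla_UV$, $\nabla_VU$, $\nabla_VV$ are proportional to $A$, a point the paper leaves implicit.
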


\begin{proof}
%U(\alpha)=V(\alpha)=0
First of all, in order to prove that $U(\alpha)=V(\alpha)=0$, we rewrite the expressions for $U(\beta)$ and $V(\alpha)$ given in Proposition~\ref{prop:Levi-Civita} using the relation~\eqref{eq:X_beta}. Some calculations using~\eqref{eq:1} show that such equations are equivalent to:
\begin{align*}
	\beta(3\alpha^{2}-\beta^{2})\ U(\alpha)+2ab\alpha^{2}(\alpha-\beta)\ V(\alpha)&=0,\\
	-2ab\beta^{2}(\alpha-\beta)\ U(\alpha)+\alpha(3\beta^{2}-\alpha^{2})\ V(\alpha)&=0,
\end{align*}
which constitute a homogeneous linear system in the unknowns $U(\alpha)$ and $V(\alpha)$. The determinant of the matrix of such system can be easily deduced to be, using~\eqref{eq:1}, $-3\alpha\beta(\alpha^{2}-\beta^{2})^{2}$, which cannot vanish since $\alpha\beta\neq0$ and $\alpha\neq\pm\beta$ on an open subset of $\mathcal{U}$. Then, we conclude that $U(\alpha)=V(\alpha)=0$.

%A(\alpha)
Again, using~\eqref{eq:X_beta}, we can rewrite the expression for $A(\alpha)$ given in Proposition~\ref{prop:Levi-Civita}. Some calculations using~\eqref{eq:1} lead us to conclude that $2(\alpha+\beta)A(\alpha)=ab\beta(c-4\alpha\beta)$, which is equivalent to the last formula in the statement.

%W(\alpha)=0
Given $W\in\Gamma(T_{0}\ominus\mathbb{R}A)$, $W(\alpha)=-\alpha W(\beta)/\beta$ by Proposition~\ref{prop:Levi-Civita} and $W(\beta)=-\alpha W(\alpha)/\beta$ by~\eqref{eq:X_beta}. Then $W(\alpha)(1-\alpha^{2}/\beta^{2})=0$, from where we deduce, since $\alpha\neq\pm\beta$, that $W(\alpha)=0$.

%Productitos escalares
Inserting the expressions for $U(\alpha),\ V(\alpha),\ W(\alpha)$ and $A(\alpha)$ that we have just obtained into the relations given in Proposition~\ref{prop:Levi-Civita} one gets, after some calculations involving~\eqref{eq:1}, the formulas for $\nabla_{U}U,\ \nabla_{U}V,\ \nabla_{V}U,\ \nabla_{V}V$ in the statement.
\end{proof}

We can now conclude the proof of Theorem 1. First of all notice that, if $\alpha=-\beta$ on $\mathcal{U}$, then $0=X(k)=X(2\alpha^{2})=4\alpha X(\alpha)$, which implies that $X(\alpha)=0$ for any $X\in T\mathcal{U}$. Since $X\in TM$ is arbitrary, we deduce that both $\alpha$ and $\beta$ have to be constant on $\mathcal{U}$, and by the density of $\mathcal{U}$, also on $M$.

Suppose now that there exists a point $p\in\mathcal{U}$ such that $\alpha(p)\neq-\beta(p)$. Then, in an open neighborhood of $p$, $\alpha\neq-\beta$. Taking the expressions given by \eqref{eq:deriv_functions} into account, the definition of the Lie bracket of $M$ yields
\begin{equation}\label{eq:[u,v]}
[U,V](\alpha)=U(V(\alpha))-V(U(\alpha))=0.
\end{equation}
On the other hand, using the fact that the Levi-Civita connection of $M$ is torsion-free, inserting the expressions for $\nabla_{U}V$ and $\nabla_{V}U$ given in Proposition~\ref{prop:Levi-Civita-k}, we obtain
\begin{equation}\label{eq:torsion_free}
	[U,V](\alpha)=(\nabla_{U}V)(\alpha)-(\nabla_{V}U)(\alpha)=\frac{c+2(\alpha^{2}+\beta^{2})}{2(\alpha+\beta)}A(\alpha).
\end{equation}
Then, either $A(\alpha)=0$ or $c+2(\alpha^{2}+\beta^{2})=c+2k=0$. If $A(\alpha)=0$ on an open subset, since $U(\alpha)=V(\alpha)=W(\alpha)=0$ for any $W\in\Gamma(T_{0}\ominus\mathbb{R}A)$, both $\alpha$ and $\beta$ must be constant, and thus, $M$ has constant principal curvatures on such open subset. Suppose now that $2k+c=0$ or, equivalently, that $k=-c/2$ on an open subset of $\mathcal{U}$. 

In the projective case, since $c>0$, the equation $\alpha^{2}+\beta^{2}=-c/2$ has no solution and, on the other hand, there is no ruled hypersurface in the complex projective case having constant principal curvatures \cite[Remark~5]{LR}. Therefore, there is no example of ruled hypersurface in $\mathbb{C}P^{n}$ whose shape operator has constant norm.

In the hyperbolic case, let $\mathcal{D}:=\mathrm{span}\{U,V\}$ be the smallest $S$-invariant distribution of $\cal{U}$ that contains $J\xi$, which clearly has rank 2. On the one hand, if an open subset of $M$ has constant principal curvatures, then it is an open part of a Lohnherr hypersurface \cite[Remark~5]{LR}, which is known to be strongly $2$-Hopf. This can be checked directly from Proposition~3.1: taking into account that it has constant principal curvatures $\alpha=\sqrt{-c}/2$, $\beta=-\sqrt{-c}/2$ and $0$, it follows that $[U,V]=\nabla_{U}V-\nabla_{V}U=0$, hence $\cal{D}$ is integrable, and moreover $\cal{D}(\alpha)=\cal{D}(\beta)=0$. On the other hand, if an open subset of $M$ satisfies $k=-c/2$, it follows from~\eqref{eq:[u,v]} that $\cal{D}$ is integrable and, by virtue of~\eqref{eq:deriv_functions} and the assumption that $\alpha^2+\beta^2$ is constant, again $\cal{D}(\alpha)=\cal{D}(\beta)=0$. Thus, in any case, $M$ is a strongly 2-Hopf hypersurface, which concludes the proof.

\subsection[Biharmonic hypersurfaces]{Ruled biharmonic hypersurfaces}
In this subsection we prove Theorem 2. Recall that a submanifold of a Riemannian manifold is said to be biharmonic if its defining isometric immersion is a critical point of the bienergy functional \cite{ou}. Moreover, they can be characterized as those submanifolds having vanishing bitension field. In the particular case of codimension one isometric immersions, that is, in the setting of biharmonic hypersurfaces, there exists an explicit formula which completely characterizes them.

\begin{proposition}\cite[Theorem 2.1]{ou}
	Let $\bar{M}$ be a Riemannian manifold and $M\subset\bar{M}$ a hypersurface with unit normal vector field $\xi$. $M$ is biharmonic if, and only if, it satisfies the following relations
	\begin{equation}\label{eq:biharmonic}
		\begin{cases}
			&\Delta H-H|S|^{2}+H\overline{\Ric}(\xi,\xi)=0,\\
			&2S(\nabla H)+H\nabla H-2H(\overline{\Ric}(\xi))^{\top}=0,
		\end{cases}
	\end{equation}
where $H=\trace(S)$ is the mean curvature of the hypersurface, $\nabla$ denotes the gradient, $\Delta$ is the Laplace-Beltrami operator of $M$, and $\overline{\Ric}$ denotes both the (0,2) and the (1,1) Ricci tensors of~$\bar{M}$.
\end{proposition}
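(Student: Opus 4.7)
The plan is to derive~\eqref{eq:biharmonic} directly from the Euler--Lagrange equation of the bienergy applied to the isometric immersion $\phi\colon M\hookrightarrow\bar M$. By definition, biharmonicity amounts to the vanishing of the bitension field
\[
\tau_{2}(\phi)=-\Delta^{\phi}\tau(\phi)-\sum_{i}\bar R(\tau(\phi),d\phi(e_{i}))\,d\phi(e_{i}),
\]
where $\{e_{i}\}$ is a local orthonormal frame on $M$, $\Delta^{\phi}=\sum_{i}\bigl(\bar\nabla^{\phi}_{e_{i}}\bar\nabla^{\phi}_{e_{i}}-\bar\nabla^{\phi}_{\nabla_{e_{i}}e_{i}}\bigr)$ is the rough Laplacian on sections of $\phi^{-1}T\bar M$, and $\tau(\phi)=\tr\II$ is the tension field. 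The entire proof consists of evaluating these two terms for a codimension-one immersion and splitting the resulting identity into its normal and tangential parts along $M$.

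For the tension field, the Gauss formula $\II(X,Y)=\langle SX,Y\rangle\xi$ immediately gives $\tau(\phi)=\sum_{i}\II(e_{i},e_{i})=H\xi$. The heart of the computation is then the rough Laplacian $\Delta^{\phi}(H\xi)$. Using the Weingarten identity $\bar\nabla_{X}\xi=-SX$ one obtains
\[
\bar\nabla^{\phi}_{X}(H\xi)=X(H)\,\xi-H\,SX,
\]
and iterating, applying Gauss again to convert $\bar\nabla_{e_{i}}(Se_{i})$ into $\nabla_{e_{i}}(Se_{i})+\langle Se_{i},Se_{i}\rangle\xi$, and tracing over $\{e_i\}$, one obtains a formula whose normal part is $(\Delta H-H|S|^{2})\xi$ and whose tangential part involves $2S(\nabla H)$, $H\nabla H$, and $\tr(\nabla S)$. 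The latter trace is rewritten, using the Codazzi equation in the form $\langle\bar R(X,Y)Z,\xi\rangle=\langle(\nabla_X S)Y,Z\rangle-\langle(\nabla_Y S)X,Z\rangle$ together with the self-adjointness of $\nabla_X S$, as $\nabla H-(\overline{\Ric}(\xi))^{\top}$.

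For the curvature term, completing $\{e_{i}\}$ to an orthonormal basis $\{e_{i},\xi\}$ of $T\bar M$ and using $\bar R(\xi,\xi)=0$ shows that $\sum_{i}\bar R(\xi,e_{i})e_{i}=\overline{\Ric}(\xi)$, which decomposes as
\[
\sum_{i}\bar R(H\xi,e_{i})e_{i}=H\,\overline{\Ric}(\xi,\xi)\,\xi+H\,(\overline{\Ric}(\xi))^{\top}.
\]
Substituting everything into the bitension formula and equating to zero the normal and tangential components of $\tau_{2}(\phi)$ separately yields precisely the two equations of~\eqref{eq:biharmonic}; in particular, the two Ricci-type contributions to the tangential part---one coming from $\tr(\nabla S)$ via Codazzi and one from the explicit curvature term---combine into the coefficient $-2H(\overline{\Ric}(\xi))^{\top}$. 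The chief obstacle is precisely this careful bookkeeping: at each step the Gauss and Weingarten formulas must be applied to split tangential and normal contributions cleanly, the signs in the Codazzi identity must be tracked with care, and the final normal/tangential decomposition is what separates a single vector identity into the two scalar/tangential equations of~\eqref{eq:biharmonic}.
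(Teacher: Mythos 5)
Your derivation is correct, and since the paper gives no proof of this proposition---it is quoted verbatim from \cite[Theorem 2.1]{ou}---your argument is essentially the proof from that cited source: tension field $\tau(\phi)=H\xi$ with $H=\trace(S)$, the rough Laplacian split into normal part $(\Delta H-H|S|^{2})\xi$ and tangential part via Gauss--Weingarten, the Codazzi identity $\tr(\nabla S)=\nabla H-(\overline{\Ric}(\xi))^{\top}$, and the curvature term $H\,\overline{\Ric}(\xi,\xi)\,\xi+H(\overline{\Ric}(\xi))^{\top}$, with the two Ricci contributions correctly merging into $-2H(\overline{\Ric}(\xi))^{\top}$. The only point worth flagging is that the coefficient structure $2S(\nabla H)+H\nabla H$ is tied to the unnormalized convention $H=\trace(S)$ used in the statement (with normalized mean curvature a factor of $\dim M$ appears), and your computation is consistent with this convention throughout.
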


In our context, $\bar{M}=\bar{M}^{n}(c)$ is a complex space form of constant holomorphic sectional curvature $c\neq 0$. In this case, the Ricci tensor of $\bar{M}$ satisfies $\overline{\Ric}(\xi,\xi)=c(n+1)/2$ and $(\overline{\Ric}(\xi))^{\top}=0$, which follows immediately from the formula for the curvature tensor of a complex space form.
%formula for the curvature tensor of a complex space form. 
Thus, in our case, equations~\eqref{eq:biharmonic} can be rewritten as follows (cf. \cite[Proposition~2.1]{fetcu}) :
\begin{equation}\label{eq:biharmonic2}
\begin{cases}
& \Delta H-H|S|^{2}+\frac{1}{2}Hc(n+1)=0,\\
& 2S(\nabla H)+H\nabla H=0.
\end{cases}
\end{equation}

%In the setting of nonflat complex space forms... paper de Fetcu!

We assume from now on that $M$ is a biharmonic ruled real hypersurface in a nonflat complex space form $\bar{M}^{n}(c)$. We will use the notations introduced in Section 3 for ruled real hypersurfaces. Thus, the mean curvature function of $M$ is $H=\alpha+\beta$.
 
As in the previous section, according to the discussion before Proposition~\ref{prop:Levi-Civita}, there is an open and dense subset $\cal{U}$ of $M$ where $h=2$. Proposition~\ref{prop:Levi-Civita} and relations~\eqref{eq:alg} hold in this open subset. In what follows, we will work in terms of an orthonormal basis of eigenvectors $\{U,V,A,W_{4},\dots,W_{2n-1}\}$, where $W_{i}\in\Gamma(T_0\ominus\mathbb{R}A)$, $i\in\{4,\dots,2n-1\}$.

Suppose that the mean curvature, $H=\alpha+\beta$, is not zero on an open subset of $\cal{U}$. We will work on this open subset of $M$ from now on. Since $M$ is a biharmonic hypersurface, it satisfies equations~\eqref{eq:biharmonic2}.
%\begin{equation}\label{biharmonic0}
%\begin{cases}
%& \Delta H-H|S|^{2}+H\Ric^{M}(\xi,\xi)=0,\\
%& 2S(\nabla H)+H\nabla H-2H(\Ric^{M}(\xi))^{\top}=0,
%\end{cases}
%\end{equation}
%where $\Ric^{M}\colon T_{q}M\to T_{q}M$ is defined by $\langle\Ric^{M}(Z),W\rangle=\Ric^{M}(Z,W)$, for $Z,W\in T_{q}M$.

With respect to the orthonormal eigenbasis $\{U,V,A,W_{4},\dots,W_{2n-1}\}$, we have
\begin{equation*}
\nabla H=U(H)U+V(H)V+A(H)A+\sum\limits_{i=4}^{2n-1}W_{i}(H) W_{i}.
\end{equation*}
On the other hand, since $U, V, A$ and $W_{i}$, for $i\in\{4,\dots,2n-1\}$, are orthogonal eigenvectors of the shape operator $S$ of $M$ associated with eigenvalues $\alpha, \beta$ and 0, respectively, we have 
\begin{equation*}
S(\nabla H)=\alpha U(H)U+\beta V(H)V.
\end{equation*}
Thus, inserting these relations into the second equation in~\eqref{eq:biharmonic2}, we obtain
\begin{equation*}
0=2S(\nabla H)+H\nabla H=(2\alpha+H)U(H)U+(2\beta+H)V(H)V+HA(H)A+\sum\limits_{i=4}^{2n-1}HW_{i}(H) W_{i}.
\end{equation*}
Since $H\neq0$ by assumption, one can deduce that $A(H)=0$ and $W_{i}(H)=0$ for $i\in\{4,\dots,2n-1\}$. Moreover, one of the following conditions holds on an open subset:
\begin{enumerate}
	\item $U(H)=V(H)=0$.
	\item $\alpha=\beta=-H/2$.
	\item $U(H)=0$ and $2\beta+H=0$. 
	\item $V(H)=0$ and $2\alpha+H=0$. 
\end{enumerate}

Neither case (1) nor case (2) are possible. Indeed, if $U(H)=V(H)=0$ on an open subset, then such subset has constant mean curvature and, since it is ruled, $H=0$ \cite{holi}, which gives a contradiction. Analogously, since $M$ is ruled, $\alpha\neq\beta$ on any open subset.

Suppose that $U(H)=0$ and $2\beta+H=0$ or, equivalently, $H=2\alpha/3$ (case (4) is analogous). Then, both $\alpha$ and $\beta$ can be expressed as $\alpha=3H/2$ and $\beta=-H/2$, respectively. Inserting these expressions in the formula for $A(\alpha)$ given in Proposition~\ref{prop:Levi-Civita}, one gets
\begin{equation*}
	\frac{3}{2}A(H)=A(\alpha)=2b(a(c+3H^{2})-3bA(H)).
\end{equation*}
Moreover, $A(H)\neq0$ on $\mathcal{U}$, from where $ab(3H^{2}+c)=0$. Since $a$ and $b$ are not zero in $\mathcal{U}$, $M$ has constant mean curvature on $\mathcal{U}$ and, as $M$ is a ruled, it must be minimal (see \cite{holi}), which concludes the proof.
%Then, the definition of the Lie bracket of $M$ yields
%\begin{equation*}
%[U,A](H)=U(A(H))-A(U(H))=0.
%\end{equation*}
%On the other hand, using the fact that the Levi-Civita connection is torsion-free and taking into account that $U(H)=A(H)=W_{i}(H)=0$, we get
%\begin{align*}
%[U,A](H)&=(\nabla_{U}A)(H)-(\nabla_{A}U)(H)=\frac{c+4H\alpha}{4(H-2\alpha)}V(H)+\frac{H(c+4\alpha(\alpha-H))}{4(H-2\alpha)^{2}}V(H)
%\\&=\frac{c(H-\alpha)-2H\alpha^{2}}{2(H-2\alpha)^{2}}V(H).
%\end{align*}
%Then, either $V(H)=0$ or $c(H-\alpha)-2H\alpha^{2}=0$. If $V(H)=0$, $M$ has constant mean curvature and, since $M$ is ruled, it is minimal, which is a contradiction. Suppose then that $c(H-\alpha)-2H\alpha^{2}=0$ or, equivalently, that $H=\frac{c\alpha}{c-2\alpha^{2}}$. In that case, since $\beta=-\alpha/3$, $M$ has constant mean curvature and thus $M$ is minimal, which gives again a contradiction.
%% Esto teño que pensar como redactalo ben!


\begin{thebibliography}{99}
	%\bibitem{ABM} T.~Adachi, T.~Bao, S.~Maeda, Congruence classes of minimal ruled real hypersurfaces in a nonflat complex space form, \emph{Hokkaido Math.\ J.}\ \textbf{43} (2014), 137--150.
	
	%\bibitem{BDJ} J.~M.~Barbosa, M.~Dajczer, L.~P.~Jorge, Minimal ruled submanifolds in spaces of constant curvature, \emph{Indiana Univ.\ Math.\ J.}\ \textbf{33} (1984), no.~4, 531--547.
	
	%\bibitem{BD:ajm} J.~M.~Barbosa, J.~A.~Delgado, Ruled submanifolds of space forms with mean curvature of nonzero constant length, \emph{Amer.\ J.\ Math.}\ \textbf{106} (1984), no.~4, 763--780.
	
	%\bibitem{BD09} J.\ Berndt, J.\ C.\ D\'{\i}az-Ramos, Homogeneous hypersurfaces in complex hyperbolic spaces, \emph{Geom.\ Dedicata} \textbf{138} (2009), 129--150.
	
	%\bibitem{jurgen}  J.\ Berndt,  S.\ Console,  C.E.\ Olmos,  \emph{Submanifolds and holonomy}, Monographs and Research Notes in Mathematics, CRC Press, Boca Raton, FL, 2016. 
	
	%\bibitem{tamaru} J.\ Berndt,  H.\ Tamaru, Cohomogeneity one actions on noncompact symmetric spaces of rank one, \emph{Trans.\ Amer.\ Math.\ Soc.}, \textbf{359} (2007), 3425-3438.
	
	\bibitem{caddeo} R.\ Caddeo, S.\ Montaldo, C.\ Oniciuc, Biharmonic submanifolds of $\mathbb{S}^{3}$, \emph{Internat. J. Math.}, \textbf{12} (2001), no.~8, 867--876.
	%Caddeo, R.; Montaldo, S.; Oniciuc, C. Biharmonic submanifolds of S3. Internat. J. Math. 12 (2001), no. 8, 867–876. (Reviewer: Ivko Dimitric) 53C43 (58E20)
	
	\bibitem{CR}  T.~E.~Cecil, P.~J.~Ryan, \emph{Geometry of hypersurfaces}, Springer Monographs in Mathematics, Springer, New York, 2015.
	
	%\bibitem{cox}  D.\ Cox,  J.\ Little,  D.\ O'Shea,  \emph{Ideals, varieties, and algorithms. An introduction to computational algebraic geometry and commutative algebra}, Fourth edition. Undergraduate Texts in Mathematics, Springer, Cham, 2015.
	
	\bibitem{DD:indiana} J.~C.~D\'iaz-Ramos, M.~Dom\'inguez-V\'azquez, Non-Hopf real hypersurfaces with constant principal curvatures in complex space forms, \emph{Indiana Univ.\ Math.\ J.}\ \textbf{60} (2011), no.~3, 859--882.
	
	\bibitem{DDV:annali} J.~C.~D\'iaz-Ramos, M.~Dom\'inguez-V\'azquez, C.~Vidal-Casti\~neira, Strongly $2$-Hopf hypersurfaces in complex projective and hyperbolic planes, \emph{Ann.\ Mat.\ Pura Appl. (4)} \textbf{197} (2018), no.~2, 469--486.
	
	\bibitem{holi} M.~Dom\'inguez-V\'azquez, O.~P\'erez-Barral, Ruled hypersurfaces with constant mean curvature in complex space forms, \emph{J.\ Geom.\ Phys.} \textbf{144} (2019), 121--125. 
	
	\bibitem{fetcu} D.\ Fetcu, E.\ Loubeau, S.\ Montaldo, C.\ Oniciuc, Biharmonic submanifolds of $\mathbb{C}P^{n}$, \emph{Math.\ Z.} \textbf{266} (2010), no.~3, 505--531.
	
	\bibitem{jiang} J.\ Guoying, 2-harmonic maps and their first and second variational formulas, \emph{Note\ Math.} \textbf{28} (2019), suppl.~1, 209--232.
	
	\bibitem{LR} M.~Lohnherr, H.~Reckziegel, On ruled real hypersurfaces in complex space forms, \emph{Geom.\ Dedicata} \textbf{74} (1999), no.~3, 267--286. 
	
	\bibitem{Maeda} S.~Maeda, Geometry of ruled real hypersurfaces in a nonflat complex space form, \emph{Mem.\ Grad.\ Sch.\ Sci.\ Eng.\ Shimane Univ.\ Ser.\ B Math.}\ \textbf{50} (2017), 1--10.
	
	\bibitem{Maeda_2} S.~Maeda, New construction of ruled real hypersurfaces in a complex hyperbolic space and its applications, \emph{Geom.\ Dedicata} (2019), https://doi.org/10.1007/s10711-019-00496-4. 
	
	\bibitem{NR} R.~Niebergall, P.~J.~Ryan, Real hypersurfaces in complex space forms, \emph{Tight and Taut Submanifolds}, MSRI Publications, Volume 32, 1997.
	
	%\bibitem{takagi} R.~Takagi, On homogeneous real hypersurfaces in a complex projective space, \emph{Osaka J.\ Math.}, \textbf{10} (1973), 495-506.
	
	\bibitem{toru} T.~Sasahara, Classification theorems for biharmonic real hypersurfaces in a complex projective space, \emph{Results\ Math.} \textbf{74} (2019), no.~4, Art. 136, 10 pp.
	
	\bibitem{ou} Y.-L.~Ou, Biharmonic hypersurfaces in Riemannian manifolds, \emph{Pacific\ J.\ Math.},  \textbf{248} (2010), no.~1, 217-–232.
	
	\bibitem{outang} Y.-L.~Ou, L.\ Tang, On the generalized Chen's conjecture on biharmonic submanifolds,  \emph{Michigan\ Math.\ J.}, \textbf{61} (2012), 531--542.
	
	
	
\end{thebibliography}
\end{document}